\numberwithin{equation}{section}
\newcommand{\ii}{{\rm{i}}}
\newcommand{\bE}{\mathbb{E}}
\newcommand{\bB}{\mathbb{B}}
\newcommand{\bS}{\mathbb{S}}
\newcommand{\bJ}{\mathbb{J}}
\def\dint{\textup{d}}
\newcommand{\E}{\mathbb E}
\newcommand{\R}{\mathbb{R}}
\newcommand{\N}{\mathbb{N}}
\newcommand{\C}{\mathbb{C}}
\renewcommand{\P}{\mathbb{P}}
\renewcommand{\Re}{\operatorname{Re}}
\newcommand{\pos}{\mathop{\mathrm{pos}}\nolimits}
\newcommand{\lin}{\mathop{\mathrm{lin}}\nolimits}
\newcommand{\ind}{\mathbbm{1}}
\newcommand{\dd}{{\rm d}}
\newcommand{\eee}{{\rm e}}
\theoremstyle{plain}
\newtheorem{theorem}{Theorem}[section]
\newtheorem{lemma}[theorem]{Lemma}
\newtheorem{proposition}[theorem]{Proposition}
\newtheorem{conjecture}[theorem]{Conjecture}
\theoremstyle{definition}
\newtheorem{example}[theorem]{Example}
\theoremstyle{remark}
\newtheorem{remark}[theorem]{Remark}
\begin{document}

\author{Anna Gusakova}
\address{Anna Gusakova: Institut f\"ur Mathematische Stochastik,
Westf\"alische Wilhelms-Universit\"at M\"unster,
Orl\'eans-Ring 10,
48149 M\"unster, Germany}
\email{gusakova@uni-muenster.de}

\author{Zakhar Kabluchko}
\address{Zakhar Kabluchko: Institut f\"ur Mathematische Stochastik,
Westf\"alische Wilhelms-Universit\"at M\"unster,
Orl\'eans-Ring 10,
48149 M\"unster, Germany}
\email{zakhar.kabluchko@uni-muenster.de}

\title[Sylvester's problem for beta-type distributions]{Sylvester's problem for beta-type distributions}

\keywords{Sylvester problem, beta distribution, beta prime distribution, normal distribution, random polytope, random simplex, regular simplex, internal angle}

\subjclass[2020]{Primary: 60D05, 52A22; Secondary: 52A40}

\begin{abstract}
Consider $d+2$ i.i.d.\ random points $X_1,\ldots, X_{d+2}$ in $\R^d$.  In this note, we compute the probability that their convex hull is a simplex focusing on three specific distributional settings:
\begin{itemize}
\item[(i)] the distribution of $X_1$ is multivariate standard normal;
\item[(ii)] the density of $X_1$ is proportional to $(1-\|x\|^2)^{\beta}$ on the unit ball (the beta distribution);
\item[(iii)] the density of $X_1$ is proportional to $(1+\|x\|^2)^{-\beta}$ (the beta prime distribution).
\end{itemize}
In the Gaussian case, we show that this probability equals twice the sum of the solid angles of a regular $(d+1)$-dimensional simplex.
\end{abstract}

\maketitle

\section{Introduction}
Consider $n\geq d+1$ i.i.d.\ random points $X_1,\ldots, X_{n}$ in $\R^d$. Let $[X_1,\ldots, X_n]$ be their convex hull. Sylvester's  problem \cite{Syl64} consists of determining the probability that these points are in convex position meaning that all points $X_1,\ldots, X_n$ are vertices of the polytope $[X_1,\ldots, X_n]$.  This problem (originally stated by Sylvester for four points in the plane) has been intensively studied in the case when the points are uniformly distributed in a convex set; see the surveys in~\cite{BaranyBullSurvey}, \cite{CalkaSurvey}, \cite{hug_rev}, \cite{pfiefer_historical_development_sylvester},  \cite{reitzner_random_polys_survey}, \cite[pp.\ 63--65]{lS76}, \cite{schneider_discrete_aspects_stoch_geom}, \cite[Chapter~8]{schneider_weil_book}, \cite[Chapter 5]{Solomon1975}, as well as~ \cite{beck_phd, marckert_sylvester_n_disk_convex_pos, marckert_rahmani_around_sylvester_plane,morin_n_points_convex_position_regulag_k_gon} for some recent impressive results.

We shall be interested in the case when $n= d+2$ and the probability distribution of $X_1$, denoted by $\mu$, is absolutely continuous. The latter assumptions ensures that, with probability $1$,  the points are in general affine position meaning that no $d+1$ points are contained in a common affine hyperplane. Then, the convex hull $[X_1,\ldots, X_{d+2}]$ is either a simplex with $d+1$ vertices or a polytope with $d+2$ vertices, with probability $1$. The former case occurs if and only if one of the points falls into the convex hull of the remaining points, thus
\begin{align*}
p_d(\mu)&:=\P\big[[X_1,\ldots, X_{d+2}]\text{ is a simplex}\big] \\
&= (d+2)\cdot \P\big[X_{d+2} \in [X_1,\ldots, X_{d+1}]\big] =  (d+2) \cdot\E \mu ([X_1,\ldots, X_{d+1}]).
\end{align*}

It should be pointed out that computing $p_d(\mu)$ for general $\mu$ is basically out of reach. Even if $\mu$ is a uniform distribution in a cube or in a regular simplex, explicit formulas for $p_d(\mu)$ are available only in small dimensions:  see~\cite{Mann94} and~\cite{buchta_reitzner} for the uniform distribution in a tetrahedron, \cite{Zin03} for the three-dimensional cube and~\cite{Ali39, Buch84} for a planar regular $r$-gone.
The only explicit result valid in any dimension we are aware of is due to~\citet{kingman_secants} who has shown that if $\mu$ is a uniform distribution in a $d$-dimensional ball (or, more generally, in  a $d$-dimensional ellipsoid), then
\begin{equation}\label{eq:kingman_sylvester_simplex}
p_d(\mu)= \frac{d+2}{2^d} \cdot  \binom{d+1}{(d+1)/2}^{d+1} \cdot  \binom{(d+1)^2}{(d+1)^2/2}^{-1}.
\end{equation}

Another case which has been previously addressed in the literature is the case when $\mu_{\rm Gauss}$ is a $d$-dimensional standard Gaussian measure.  Maehara~\cite{maehara_on_sylvester_japanese} and Blatter~\cite{blatter_four_shots} solved Sylvester's problem for $4$ normally distributed points in the plane showing that they form a triangle with probability $1 - \frac {6}{\pi}\arcsin \frac 13$.
Recently, Frick, Newman and Pegden~\cite{frick_newman_pedgen} related Sylvester's problem for $d+2$ normally distributed points in $\R^d$ to the so-called Youden's demon problem studied previously by~\citet{kendall_two_problems_youden} and~\citet{david_sample_mean_among_extreme,david_sample_mean_moderate_order}. In particular, in \cite{frick_newman_pedgen} it was shown that $\frac 12 p_d(\mu_{\rm Gauss})$ coincides with the probability that in a one-dimensional i.i.d.\ standard normal sample $\xi_1,\ldots, \xi_{d+2}$ the mean $\frac{1}{d+2}(\xi_1+ \ldots + \xi_{d+2})$ lies between the first and the second smallest element of the sample. Moreover, in \cite{frick_newman_pedgen} it was also proven that  $5$ independent and normally distributed points in $\R^3$ form a simplex with probability $\frac 12 - \frac{5}{\pi} \arcsin \frac 14$.  In fact, the solution of the Sylvester problem for the Gaussian distribution (including the values for $d=2$ and $3$ mentioned above) can be easily deduced from~\cite{kabluchko_zaporozhets_absorption}, as we shall show below.  It should be also mentioned that even earlier Efron~\cite[Equations (7.5) and (7.6) on p.~341]{efron} obtained formulas for the expected probability content of a convex hull of $n$ normally distributed random points in dimensions $d=2,3$.

An analogue of the Sylvester problem for spherical convex hulls of $d+2$ points following the uniform distribution on the $d$-dimensional sphere has been solved by~\citet{maehara_martini_sylvester_sphere}; see also~\cite[Theorem~3.6]{maehara_martini_geometric_probab} and~\cite[Corollary~8.1]{maehara_martini_book} for low-dimensional cases. A similar problem on  the half-sphere has been solved in~\cite[Section~2.4]{kabluchko_poisson_zero}.

\vspace*{2mm}
The aim of the present note is to solve the Sylvester problem for $n=d+2$ points following a beta-type distribution on $\R^d$. The family of beta-type distributions, introduced in \cite{miles} and~\cite{ruben_miles}, includes the $d$-dimensional standard normal distribution, the beta distributions with density proportional to $(1-\|x\|^2)^{\beta}\ind_{\{\|x\| <  1\}}$ (where $\beta>-1$ is a parameter) and the beta prime distributions with density proportional to $(1+\|x\|^2)^{-\beta}$ (where $\beta > d/2$ is a parameter). We provide a purely geometric argument reducing the Sylvester problem to computing the expected angle sums of beta-type simplices; to be defined in~\eqref{eq:Jbeta} and~\eqref{eq:Jbetaprime}, below. These angle sums have been determined in~\cite{kabluchko_angles_explicit_formula}. In the case of the normal distribution, our argument relates the Sylvester problem to the solid angles of the regular simplex. In particular, the values $1 - \frac {6}{\pi}\arcsin \frac 13$ and $\frac 12 - \frac{5}{\pi} \arcsin \frac 14$ mentioned above coincide with twice the angle sums of a regular tetrahedron and a regular $4$-dimensional simplex, respectively, -- it seems that this fact has been overlooked by the previous authors.


In Section~\ref{sec:results}, we state our main results concerning the Sylvester problem. The corresponding proofs are provided in Section~\ref{sec:proofs}.

\section{Results}  \label{sec:results}

\subsection{Notation}
Let $\bB^d:= \{x\in \R^d: \|x\| \leq  1\}$ be the unit ball and $\bS^{d-1}= \{x\in \R^d : \|x\| = 1\}$ be the unit sphere in $\R^d$. Here, $\|\cdot\|$ is the Euclidean norm.  The positive hull of $n$ vectors $x_1,\ldots, x_n\in \R^d$ is denoted by
$$
\pos(x_1,\ldots,x_n) := \left\{\lambda_1 x_1 + \ldots + \lambda_n x_n: \lambda_1,\ldots,\lambda_n\geq 0\right\}.
$$
A polyhedral cone is a positive hull of finitely many vectors in $\R^d$. Let $C\subseteq \R^d$ be a polyhedral cone, $\lin C$ the minimal linear subspace containing $C$ (called the linear hull of $C$), and $\lambda_{\lin C}$ the Lebesgue measure on $\lin C$.  The (solid) angle of $C$ is defined as
$$
\alpha(C) = \frac{\lambda_{\lin C}(\bB^d \cap C)}{\lambda_{\lin C} (\bB^d \cap \lin C)},
$$
which is the proportion of the unit ball occupied by the cone with $\lin C$ viewed as the ambient space. Note that $\alpha(C)$ takes values between $0$ and $1$. For more information on probabilistic aspects of convex cones we refer to the book of~\citet{schneider_book_convex_cones_probab_geom}.

Recall that given $n\ge d+1$ i.i.d.\ random points $X_1,\ldots,X_n$ in $\R^d$ with absolutely continuous distribution $\mu$ we denote by $[X_1,\ldots, X_n]$ their convex hull and by
\[
p_d(\mu):=\P\big[[X_1,\ldots, X_{d+2}] \text{ is a simplex}\big] =  (d+2) \cdot\E \mu ([X_1,\ldots, X_{d+1}]),
\]
the probability that they form a simplex.

\subsection{Sylvester problem for the normal distribution}
Let $X_1,\ldots, X_{d+2}$ be i.i.d.\ random points in $\R^d$ with standard normal distribution $\mu_{\rm Gauss}$. Our aim is to determine
$$
p_d(\mu_{\rm Gauss})=\P\big[\,[X_1,\ldots, X_{d+2}] \text{ is a simplex}\,\big].
$$

Before we state the result let us recall some facts about the regular simplex. Let $e_1,\ldots,e_n$ be the standard orthonormal basis in $\R^n$ and consider the $(n-1)$-dimensional regular simplex
$
\Delta_n:= [e_1,\ldots,e_n].
$
The internal angle of $\Delta_n$ at vertex $e_i$ is the angle of the cone $\pos (\{e_j-e_i\colon 1\leq j\leq n, j\neq i\})$. Since all these angles are equal, the sum of internal angles of $\Delta_n$ at its vertices is defined as
$$
\bJ_{n,1}(\infty): = n \cdot  \alpha\big(\pos (e_2-e_1,\ldots, e_n-e_1)\big).
$$
More generally, it is possible to define internal angle sums at $(k-1)$-dimensional faces of $\Delta_n$, for all $k\in \{1,\ldots, n\}$. These angle sums, denoted by $\bJ_{n,k}(\infty)$ to ensure consistency with~\cite{kabluchko_angles_explicit_formula}, will be needed here only in the special case $k=1$. The following explicit formula is known:
\begin{align}\label{eq:J_n_1_formula}
\bJ_{n,1}(\infty)
&=
n \cdot \frac 1 {\sqrt {2\pi}} \int_{-\infty}^{+\infty} \Phi^{n-1} \left( \frac{\ii x}{\sqrt n}\right)  \eee^{-x^2/2} \dd x,
\end{align}
where $\Phi(z)$ is cumulative distribution function of the standard normal random variable. Note that $\Phi(z)$ can be extended to an analytic function on the entire complex plane:
$$
\Phi(z)
=
\frac 12 + \frac 1 {\sqrt{2\pi}} \int_{0}^z \eee^{-t^2/2} \dd t
=
\frac 12  + \frac 1 {\sqrt{2\pi}} \sum_{n =0}^{\infty}  \frac{(-1)^n }{(2n+1) 2^n n!} z^{2n+1}, \qquad z\in \C.
$$
Formula~\eqref{eq:J_n_1_formula} goes back to Rogers~\cite[Section~4]{rogers} (where the method used was attributed to H.\ E.\ Daniels) and Vershik and Sporyshev~\cite[Lemma~4]{vershik_sporyshev_asymptotic_faces_random_polyhedra1992}; see also~\cite[Proposition~1.2]{kabluchko_zaporozhets_absorption} for this formula and further results.

\begin{theorem}[Sylvester problem for the normal distribution]\label{theo:sylvester_normal}
Let $X_1,\ldots, X_{d+2}$ be i.i.d.\ following a standard normal distribution on $\R^d$. Then, the probability that $[X_1,\ldots, X_{d+2}]$ is a simplex is given by
\begin{align}
p_d(\mu_{\rm Gauss})=2 \cdot \bJ_{d+2,1} (\infty) =  \frac {2(d+2)} {\sqrt {2\pi}} \int_{-\infty}^{+\infty} \Phi^{d+1} \left( \frac{\ii x}{\sqrt {d+2}}\right)  \eee^{-x^2/2} \dd x.
\label{eq:sylvester_normal}
\end{align}
\end{theorem}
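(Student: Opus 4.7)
The plan is to recognize $X_{d+2} \in [X_1, \ldots, X_{d+1}]$ as a sign-pattern condition on the unique (up to scalar) affine dependence among the $d+2$ points, and then to exploit Gaussian rotation invariance to identify the random line of dependences with a uniformly distributed line in a fixed hyperplane of $\R^{d+2}$.

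Concretely, since $X_1, \ldots, X_{d+2}$ are a.s.\ in general affine position, the space
\[
L := \Bigl\{\mu \in \R^{d+2} : \sum_{i=1}^{d+2} \mu_i X_i = 0, \; \sum_{i=1}^{d+2} \mu_i = 0\Bigr\}
\]
is a.s.\ a $1$-dimensional subspace of $H := \mathbf{1}^\perp = \{\mu \in \R^{d+2} : \sum_i \mu_i = 0\}$. A short check shows that $X_{d+2} \in [X_1, \ldots, X_{d+1}]$ is equivalent to $L$ being contained in $C \cup (-C)$, where $C = \pos(e_1 - e_{d+2}, \ldots, e_{d+1} - e_{d+2}) \subset H$ is the internal cone of $\Delta_{d+2}$ at the vertex $e_{d+2}$; indeed, the ray of $L$ pointing into $-C$ is precisely the vector of coefficients $\lambda = (\lambda_1, \ldots, \lambda_{d+1}, -1)$ of the representation $X_{d+2} = \sum_{i=1}^{d+1} \lambda_i X_i$ with $\lambda_i \ge 0$ and $\sum_{i=1}^{d+1}\lambda_i = 1$.

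Next, I would show that $L$ is uniformly distributed on the Grassmannian of lines in $H$. The $d \times (d+2)$ matrix $X$ whose columns are the $X_i$ has rows that are i.i.d.\ $N(0, I_{d+2})$, so its row span is uniform on the Grassmannian of $d$-planes in $\R^{d+2}$, making $\ker X$ a uniform random $2$-dimensional subspace. Since the orthogonal group $O(H)$ (extended by the identity on $\R \mathbf{1}$) sits inside $O(d+2)$, it preserves the law of $\ker X$; intersecting with the fixed hyperplane $H$ therefore yields the unique $O(H)$-invariant probability distribution on the set of lines in $H$, which is the uniform one.

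Finally, for a uniformly distributed line $L$ in $H$, the probability that $L \subseteq C \cup (-C)$ equals $\alpha(C) + \alpha(-C) = 2\alpha(C)$, because $C$ is pointed (so $C \cap (-C) = \{0\}$) and a uniform unit vector in $H$ falls into $C$, respectively $-C$, with probability $\alpha(C)$ in each case. Combining these facts with $\bJ_{d+2, 1}(\infty) = (d+2)\, \alpha(C)$ yields $p_d(\mu_{\rm Gauss}) = (d+2) \cdot 2\alpha(C) = 2\, \bJ_{d+2, 1}(\infty)$, and the integral representation is then immediate from formula~\eqref{eq:J_n_1_formula}. I expect the delicate step to be the second one: a direct calculation of the distribution of $L$ from the Gaussian density would be unpleasant, whereas the group-theoretic argument above reduces it to the transitivity of $O(H)$ on lines in $H$.
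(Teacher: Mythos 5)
Your proof is correct, but it takes a genuinely different route from the paper's. The paper lifts the problem one dimension up: it takes $d+2$ standard Gaussian points in $\R^{d+1}$, uses the fact that their orthogonal projection onto a hyperplane is again standard Gaussian, applies Lemma~\ref{lm:angles_probabilities} with a uniformly random projection direction to write $p_d(\mu_{\rm Gauss})$ as twice the expected solid angle sum of the $(d+1)$-dimensional \emph{Gaussian} simplex at its vertices, and then invokes the nontrivial external theorem that these expected angles coincide with the angles of the \emph{regular} simplex. You work in the Gale-dual picture instead: the simplex event is read off from the sign pattern of the (a.s.\ unique up to scale) affine dependence, i.e.\ from whether the line $L=\ker X\cap \mathbf{1}^{\perp}$ meets the tangent cone $C=\pos(e_1-e_{d+2},\ldots,e_{d+1}-e_{d+2})$, and the invariance of the law of an i.i.d.\ Gaussian matrix under right multiplication by $O(d+2)$ makes $L$ uniform on the lines of $\mathbf{1}^{\perp}$, giving $\P\big[X_{d+2}\in[X_1,\ldots,X_{d+1}]\big]=2\alpha(C)$ directly. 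All steps check out; the only slip is cosmetic: the coefficient vector $(\lambda_1,\ldots,\lambda_{d+1},-1)=\sum_{i\le d+1}\lambda_i(e_i-e_{d+2})$ lies in $C$ rather than $-C$, but since the event $L\subseteq C\cup(-C)$ is symmetric this changes nothing. What your route buys is self-containedness: it bypasses the Gaussian-simplex-angle theorem entirely and makes the appearance of the regular simplex transparent. What it gives up is generality: it hinges on the two-sided rotational invariance of a matrix with i.i.d.\ Gaussian entries, which is special to the normal case, whereas the paper's projection argument is precisely the one that carries over to the beta and beta prime families in Theorem~\ref{theo:sylvester_beta}.
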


\begin{example}
The sum of internal angles at vertices of the $3$- and $4$-dimensional simplices are known to be $\bJ_{4,1}(\infty) = \frac 12 - \frac {3}{\pi}\arcsin \frac 13$ and $\bJ_{5,1}(\infty) = \frac 14 - \frac{5}{2\pi} \arcsin \frac 14$ (see, e.g.,  \cite[Propositions~1.2 and~1.4 (f)]{kabluchko_zaporozhets_absorption}), giving the values of the Sylvester probability in dimensions $d=2$ and $d=3$:
$$
p_2(\mu_{\rm Gauss}) = 1 - \frac {6}{\pi}\arcsin \frac 13,
\qquad
p_2(\mu_{\rm Gauss}) = \frac 12 - \frac{5}{\pi} \arcsin \frac 14,
$$
which were previously obtained in~\cite{maehara_on_sylvester_japanese}, \cite{blatter_four_shots} and~\cite{frick_newman_pedgen}.
\end{example}

\begin{remark}
It is well known that the Sylvester problem is invariant under any nonsingular affine transformation $A:\R^d\to\R^d$ in the sense that  $[X_1,\ldots, X_{d+2}]$ is a simplex if and only if $[A X_1,\ldots, A X_{d+2}]$ is a simplex. It follows that Theorem~\ref{eq:sylvester_normal} provides a solution to Sylvester problem also if $X_1,\ldots, X_{d+2}$ are i.i.d.\ with any nonsingular multidimensional normal distribution on $\R^d$.
\end{remark}

\subsection{Sylvester problem for beta and beta prime distributions}
A probability measure $\mu_{d,\beta}$ is called a $d$-dimensional \textit{beta distribution} with parameter $\beta>-1$ if it has a density of the form
\[
f_{d,\beta}(x)=c_{d,\beta} \big( 1-\left\| x \right\|^2 \big)^\beta\ind_{\{\|x\| <  1\}},\qquad
c_{d,\beta}= \frac{ \Gamma\left( \frac{d}{2} + \beta + 1 \right) }{ \pi^{ \frac{d}{2} } \Gamma\left( \beta+1 \right) },
\]
with respect to the Lebesgue measure in $\R^d$. Although the function $(1-\| x \|^2 )^\beta\ind_{\{\|x\| <  1\}}$ is not integrable for $\beta=-1$ and hence $f_{d,-1}$ does not exist as probability density, the measure $\mu_{d,\beta}$ converges weakly as $\beta\downarrow-1$ to the uniform distribution on the unit sphere $\bS^{d-1}$. For this reason we will set $\mu_{d,-1}$ to be the  uniform distribution on $\bS^{d-1}$. Analogously, a probability measure $\tilde\mu_{d,\beta}$ is called $d$-dimensional \textit{beta prime distribution} with parameter $\beta>d/2$ if it has a density of the form
\[
\tilde{f}_{d,\beta}(x)=\tilde{c}_{d,\beta} \big( 1+\left\| x \right\|^2 \big)^{-\beta},\qquad
\tilde{c}_{d,\beta}= \frac{ \Gamma\left( \beta \right) }{\pi^{ \frac{d}{2} } \Gamma\left( \beta - \frac{d}{2} \right)},
\]
with respect to the Lebesgue measure in $\R^d$. Basic properties of beta and beta prime distributions are reviewed in~\cite{beta_polytopes} and~\cite{kabluchko_steigenberger_thaele_boob_beta_type}.
Let $X_1,\ldots, X_{d+2}$ be i.i.d.\ with the beta distribution $\mu_{d,\beta}$ and $\tilde X_1,\ldots, \tilde X_{d+2}$ i.i.d.\ with the beta prime distribution $\tilde\mu_{d,\beta}$. Our aim is to determine the probabilities
\begin{align*}
p_d(\beta)&:=p_d(\mu_{d,\beta})= \P\big[[X_1,\ldots, X_{d+2}] \text{ is a simplex}\,\big],
\\
\tilde p_d(\beta)&:=p_d(\tilde\mu_{d,\beta})=  \P\big[[\tilde X_1,\ldots, \tilde X_{d+2}] \text{ is a simplex}\,\big].
\end{align*}

We shall express these probabilities through the expected angle sums of beta and beta prime simplices which were computed in~\cite{kabluchko_angles_explicit_formula}. Let us recall some results from~\cite{beta_polytopes} and~\cite{kabluchko_angles_explicit_formula}.  We use the notation
\[
c_{\beta}
:=
c_{1,\beta}
=
\frac{ \Gamma\left(\beta + \frac{3}{2} \right) }{  \sqrt \pi\, \Gamma (\beta+1)},
\qquad
\tilde c_{\beta}
:=
\tilde{c}_{1,\beta}
=
\frac{ \Gamma(\beta)}{ \sqrt \pi\, \Gamma\left( \beta - \frac{1}{2}\right)}.
\]
Fix some  $n\geq 2$.  Let $Z_1,\ldots,Z_{n}$ be independent random points in $\R^{n-1}$ sampled from the beta distribution $\mu_{n-1,\beta}$, where $\beta \geq  -1$.   The convex hull $[Z_1,\ldots,Z_n]$ is called the $(n-1)$-dimensional \textit{beta simplex}. The internal angle of this simplex at its vertex $Z_i$, $1\leq i\leq n$, is the angle of the cone $\pos (\{Z_j-Z_i\colon 1\leq j\leq n, j\neq i\})$. All these internal angles are random and have the same distribution. The expected internal angle sum of $[Z_1,\ldots,Z_n]$ at its vertices is then denoted by
\begin{equation}\label{eq:Jbeta}
\bJ_{n,1}(\beta) := n \cdot  \E \alpha\big(\pos (Z_2-Z_1,\ldots, Z_n-Z_1)\big).
\end{equation}
It is known~\cite[Section~3.3.1]{kabluchko_angles_explicit_formula} that $\bJ_{n,1}(\beta) \to \bJ_{n,1}(\infty)$ as $\beta\to+\infty$. In fact, it is possible~\cite{beta_polytopes,kabluchko_angles_explicit_formula} to define expected internal angle sums $\bJ_{n,k}(\beta)$ at $(k-1)$-dimensional faces for all $k\in \{1,\ldots, n\}$, but we shall need only the case $k=1$ here.
An explicit formula for $\bJ_{n,k}(\beta)$ has been derived in~\cite[Equations~(1.5)--(1.7)]{kabluchko_angles_explicit_formula}. For $k=1$ it takes the form
\begin{equation}\label{eq:J_nk_integral}
\bJ_{n,1}\left(\frac{\alpha - n + 1}{2}\right)
=
n\cdot  \int_{-\infty}^{+\infty} c_{\frac{\alpha n}2} (\cosh x)^{- \alpha n - 2}
\left(\frac 12  + \ii \int_0^x  c_{\frac{\alpha-1}{2}} (\cosh y)^{\alpha}\dd y \right)^{n-1} \dd x,
\end{equation}
for all integer $n\geq 3$ and $\alpha \geq n-3$.

Similar situation appears in the beta prime case. Let $\tilde Z_1,\ldots,\tilde Z_{n}$ be independent random points in $\R^{n-1}$ sampled from the beta prime distribution $\tilde \mu_{n-1,\beta}$, where $\beta > (n-1)/2$. Their convex hull $[\tilde Z_1,\ldots,\tilde Z_n]$ is called the $(n-1)$-dimensional \textit{beta prime simplex} and its expected internal angle sum at vertices is denoted by
\begin{equation}\label{eq:Jbetaprime}
\tilde \bJ_{n,1}(\beta)  := n \cdot  \E \alpha\big(\pos (\tilde Z_2- \tilde Z_1,\ldots, \tilde Z_n - \tilde Z_1)\big).
\end{equation}
An explicit formula for the quantities $\tilde \bJ_{n,1}(\beta)$ has been derived in~\cite[Equations~(1.14)--(1.16)]{kabluchko_angles_explicit_formula}:
\begin{equation}\label{eq:J_nk_tilde_integral}
\tilde \bJ_{n,1}\left(\frac{\alpha + n - 1}{2}\right)
=
n\cdot  \int_{-\infty}^{+\infty} \tilde c_{\frac{\alpha n}2} (\cosh x)^{-(\alpha n-1)} \left(\frac 12  + \ii \int_0^x \tilde c_{\frac{\alpha+1}{2}}(\cosh y)^{\alpha-1}\dd y \right)^{n-1} \dd x,
\end{equation}
for all $n\in\N$ and $\alpha > 1/n$. It is known~\cite[Section~3.3.1]{kabluchko_angles_explicit_formula} that $\tilde \bJ_{n,1}(\beta) \to \bJ_{n,1}(\infty)$ as $\beta\to+\infty$.
Now we are prepared to state the solution of the Sylvester problem for  beta and beta prime distributions.
\begin{theorem}[Sylvester problem for beta and beta prime distributions]\label{theo:sylvester_beta}
Let $X_1,\ldots, X_{d+2}$ be i.i.d.\ with the beta distribution $\mu_{d,\beta}$ for some $\beta \geq -1/2$. Then, the probability that $[X_1,\ldots, X_{d+2}]$ is a simplex is given by
\begin{align}
p_d(\beta)
&=
2 \cdot \bJ_{d+2,1} \left(\beta - \frac 12\right) \label{eq:sylvester_beta_1}\\
&=
(2d+4) \int_{-\infty}^{+\infty} c_{\frac{(2\beta + d) (d+2)}2} (\cosh x)^{-(2\beta + d)(d+2) - 2}
\left(\frac 12  + \ii \int_0^x  c_{\frac{2\beta + d-1}{2}} (\cosh y)^{2\beta + d}\dd y \right)^{d+1} \dd x.
\label{eq:sylvester_beta_2}
\end{align}
In fact, formula~\eqref{eq:sylvester_beta_2} holds for $\beta\geq -1$ provided $d\geq 2$.
Similarly, if $\tilde X_1,\ldots, \tilde X_{d+2}$ are i.i.d.\ with the beta prime distribution $\tilde \mu_{d,\beta}$ for some $\beta >d/2$, then the probability that $[\tilde X_1,\ldots, \tilde X_{d+2}]$ is a simplex is given by
\begin{align*}
\widetilde p_d(\beta)
&=
2 \cdot \tilde \bJ_{d+2,1} \left(\beta + \frac 12\right) 
\\
&=
(2d+4) \int_{-\infty}^{+\infty} \tilde c_{\frac{(2\beta - d) (d+2)}2} (\cosh x)^{-((2\beta-d) (d+2)-1)} \left(\frac 12  + \ii \int_0^x \tilde c_{\frac{2\beta - d + 1}{2}}(\cosh y)^{2\beta - d - 1}\dd y \right)^{d+1} \dd x,
\end{align*}
where for the second formula we additionally require $2\beta > d + (d+2)^{-1}$ (otherwise, the integral does not converge).
\end{theorem}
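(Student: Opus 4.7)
The plan is to reduce the Sylvester problem in $\R^d$ to the expected internal angle sum of a beta (resp.\ beta prime) simplex with $d+2$ vertices in $\R^{d+1}$, via a rotation-invariant one-dimensional lift, and then to invoke the explicit integral formulas \eqref{eq:J_nk_integral} and \eqref{eq:J_nk_tilde_integral}.

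\emph{The lift.} Given $X_1,\ldots,X_{d+2}$ i.i.d.\ $\mu_{d,\beta}$ in $\R^d$, I would choose (on an enlarged probability space) independent scalars $\xi_1,\ldots,\xi_{d+2}$ so that $X'_i:=(X_i,\xi_i)\in\R^{d+1}$ is distributed as $\mu_{d+1,\beta-1/2}$. This lift exists because a direct integration shows that the marginal of $\mu_{d+1,\beta-1/2}$ on the first $d$ coordinates is precisely $\mu_{d,\beta}$; the constraint $\beta\geq-1/2$ ensures $\beta-1/2\geq-1$, so the lifted distribution is well-defined and rotation-invariant on $\R^{d+1}$.

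\emph{Angle-to-Sylvester identification.} Let $T_i:=\pos(X'_j-X'_i:j\neq i)$, which is a.s.\ a pointed simplicial cone in $\R^{d+1}$ (in particular contains no line). Rotation invariance of the joint law of $(X'_1,\ldots,X'_{d+2})$ forces the function $u\mapsto\P[u\in T_i]$ to be constant on $\bS^d$, so
\begin{equation*}
\E\alpha(T_i)=\P[e_{d+1}\in T_i]=\P[-e_{d+1}\in T_i].
\end{equation*}
The geometric key is the following: $\pm e_{d+1}\in T_i$ means $\sum_{j\neq i}\lambda_j(X'_j-X'_i)=\pm e_{d+1}$ for some $\lambda_j\geq 0$; projecting onto the first $d$ coordinates yields $\sum\lambda_j(X_j-X_i)=0$ with $\Lambda:=\sum\lambda_j>0$, which is equivalent to $X_i\in[X_j\colon j\neq i]$. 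Conversely, any convex representation $X_i=\sum_{j\neq i}\mu_jX_j$ lifts with the unique scaling $\lambda_j=\Lambda\mu_j$, and the sign of $\sum_{j\neq i}\mu_j\xi_j-\xi_i$ determines which of $\pm e_{d+1}$ actually lies in $T_i$. Thus, up to a null set,
\begin{equation*}
\{e_{d+1}\in T_i\}\sqcup\{-e_{d+1}\in T_i\}=\{X_i\in[X_j\colon j\neq i]\}.
\end{equation*}
Summing over $i$ and using exchangeability yields $2\bJ_{d+2,1}(\beta-1/2)=p_d(\beta)$, i.e.\ \eqref{eq:sylvester_beta_1}. Substituting $n=d+2$, $\alpha=2\beta+d$ into \eqref{eq:J_nk_integral} produces \eqref{eq:sylvester_beta_2}; the extended range $\beta\geq-1$ for $d\geq 2$ follows by verifying convergence of the resulting integrand and by analytic continuation in $\beta$.

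The beta prime case is parallel: the marginal of $\tilde\mu_{d+1,\beta+1/2}$ on the first $d$ coordinates is $\tilde\mu_{d,\beta}$, and the hypothesis $\beta>d/2$ gives $\beta+1/2>(d+1)/2$, so the lift is well-defined. The same rotation-and-projection argument yields $\tilde p_d(\beta)=2\tilde\bJ_{d+2,1}(\beta+1/2)$, and the explicit integral formula then comes from \eqref{eq:J_nk_tilde_integral} with $n=d+2$, $\alpha=2\beta-d$; the additional requirement $2\beta>d+(d+2)^{-1}$ is precisely $\alpha>1/n$, the convergence condition built into \eqref{eq:J_nk_tilde_integral}. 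The main obstacle I anticipate is making the disjoint-union identity rigorous: it requires the a.s.\ general position of the lifted $(d+1)$-dimensional points (a consequence of absolute continuity of the beta / beta prime densities) together with careful sign bookkeeping in the last coordinate to rule out boundary cases and to guarantee disjointness of $\{e_{d+1}\in T_i\}$ and $\{-e_{d+1}\in T_i\}$.
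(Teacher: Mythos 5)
Your argument is correct and follows essentially the same route as the paper: lift to $\R^{d+1}$ with the parameter shifted by $\tfrac12$ using the projection/marginal property of the beta and beta prime families, identify $p_d(\beta)$ with twice the expected angle sum $\bJ_{d+2,1}(\beta-\tfrac12)$ (resp.\ $\tilde\bJ_{d+2,1}(\beta+\tfrac12)$), and then invoke \eqref{eq:J_nk_integral}, \eqref{eq:J_nk_tilde_integral} and analytic continuation for the extended parameter range. The only cosmetic difference is that where the paper projects onto a uniformly random hyperplane $U^{\perp}$ and cites Lemma~\ref{lm:angles_probabilities}, you fix the direction $e_{d+1}$ and exploit rotation invariance of the lifted points --- a Fubini swap that re-derives the same angle--probability identity.
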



Let us consider some special cases of Theorem~\ref{theo:sylvester_beta} in which the formulas for $p_d(\beta)$ and $\tilde p_d(\beta)$ simplify.

\begin{example}[Uniform distribution]
The beta distribution with $\beta=0$ is the uniform distribution on the unit ball. The probability that $[X_1,\ldots, X_{d+2}]$ is a simplex is given by  Kingman's formula~\eqref{eq:kingman_sylvester_simplex}, which coincides with expression for $2 \cdot \bJ_{d+2,1}(-1/2)$ obtained in~\cite[Theorem 3.9]{kabluchko_recursive_scheme}.
\end{example}
\begin{example}[Beta distribution with $\beta = 1$]
If $X_1,\ldots, X_{d+2}$ are i.i.d.\ random points\ in the unit ball $\bB^d$ with density proportional to $(1-\|x\|^2)$, then the Sylvester probability is
$$
p_d(1) =   2 \cdot \bJ_{d+2,1} \left(\frac 12\right) = \frac{2\pi (d+2)\left((d+2)^2+1\right)\left((d+2)^2 + d + 4\right)}{(d+5)\, 2^{(d+2)(2d+5)}} \binom{d+3}{\frac{d+3}{2}}^{d+1} \binom{(d+2)^2}{\frac {(d+2)^2}{2}},
$$
where the last identity follows from~\cite[Theorem~3.8]{kabluchko_recursive_scheme}.
\end{example}
\begin{example}[Arcsine distribution]
Let $X_1,\ldots, X_{d+2}$ be i.i.d.\ random points\ in the unit ball $\bB^d$ with density proportional to $1/\sqrt{1-\|x\|^2}$. Then, the Sylvester probability is $p_d(-1/2) =  2 \cdot \bJ_{d+2,1} (-1)$. The right-hand side can be evaluated in small dimensions, see~\cite[Section~3.1]{kabluchko_recursive_scheme}, giving
\begin{align*}
p_2\Big(-{1\over 2}\Big) &= {1\over 4}, &p_3\Big(-{1\over 2}\Big) &= \frac{539}{144 \pi ^2}-\frac{1}{3},\\
 p_4\Big(-{1\over 2}\Big) &= \frac{25411}{3670016},
&p_5\Big(-{1\over 2}\Big) &= \frac{1}{3}+\frac{113537407}{24192000 \pi ^4}-\frac{2144238917}{570810240 \pi ^2}.
\end{align*}
\end{example}

\begin{example}[Semispherical distribution]
Let $X_1,\ldots, X_{d+2}$ be i.i.d.\ random points in the unit ball $\bB^d$ with density proportional to $\sqrt{1-\|x\|^2}$. Then, the Sylvester probability is $p_d(1/2) =  2 \cdot \bJ_{d+2,1} (0)$. The table of values of $\bJ_{n,1}(0)$ given in  \cite[Section~3.2]{kabluchko_recursive_scheme} yields
$$
p_2\Big({1\over 2}\Big) = \frac{401}{1280}, \quad p_3\Big({1\over 2}\Big) = \frac{1692197}{423360 \pi ^2}-\frac{1}{3}, \quad p_4\Big({1\over 2}\Big) =\frac{112433094897}{8598524526592}.
$$
\end{example}

\begin{example}[Cauchy distribution]
Let $X_1,\ldots, X_{d+2}$ be i.i.d.\ random points in $\R^d$ with density proportional to $(1+\|x\|^2)^{-(d+1)/2}$.   A formula for the Sylvester probability has been derived in~\cite[Theorem~2.6]{kabluchko_poisson_zero}.
In~\cite[p.~402]{kabluchko_poisson_cells_high_dimensional} it has been shown that $p_d((d+1)/2) \sim 2\sqrt 3 d \pi^{-d-1}$ as $d\to\infty$.
In these articles, the results are stated in terms of convex cones in the upper half-space of $\R^{d+1}$. To pass to the Cauchy distribution, one can use the  gnomonic projection as described in~\cite[Section~2.2.1]{convex_hull_sphere}.
\end{example}

\begin{example}[Beta prime distribution with $\beta = (d/2) +1$]
Let $X_1, \ldots, X_{d+2}$ be i.i.d.\ random points\ in $\R^d$ with density proportional to $(1+\|x\|^2)^{-(d+2)/2}$. The Sylvester probability is
$$
\tilde p_d\left(\frac{d}{2} + 1\right) = 2 \cdot \tilde \bJ_{d+2,1} \left(\frac {d+3}2\right) = \frac{4 (2d+3)}{\binom{2d+4}{d+2}},
$$
where the last formula follows from~\cite[Theorem~2.6]{kabluchko_angles_explicit_formula}.
\end{example}

\subsection{Further results and conjectures}
If $\beta$ is integer or half-integer, it is possible to describe the arithmetic nature of the numbers $p_d(\beta) = 2 \cdot \bJ_{d+2,1} (\beta - \frac 12)$ and $\tilde p_d(\beta) = 2 \cdot \tilde \bJ_{d+2,1} (\beta + \frac 12)$. The next two results follow from Theorem~1.6 and Theorem~1.10 in~\cite{kabluchko_angles_explicit_formula}.
\begin{proposition}
\label{prop:arithm_sylvester_beta}
Let $\beta \ge -1/2$ be integer or half-integer and $d\in\N$.
\begin{itemize}
\item[(a)] If $2\beta + d$ is odd, then $p_d(\beta)$ is a rational number.
\item[(b)] If both $2\beta$ and $d$ are even, then $p_d(\beta)$ is a number of the form $q\pi^{-d}$ with some rational $q$.
\item[(c)] If both $2\beta$ and $d$ are odd, then $p_d(\beta)$ can be expressed as  $q_0 + q_2 \pi^{-2} + q_4 \pi^{-4} + \ldots + q_{d+1} \pi^{-d-1}$,
     where the numbers $q_{2j}$ are rational.
\end{itemize}
\end{proposition}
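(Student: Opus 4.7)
The plan is to reduce the statement directly to the arithmetic classification of expected angle sums of beta simplices, which is proved as Theorem~1.6 in~\cite{kabluchko_angles_explicit_formula}. By Theorem~\ref{theo:sylvester_beta},
\[
p_d(\beta) \;=\; 2\,\bJ_{d+2,1}\!\left(\beta - \tfrac{1}{2}\right),
\]
so it suffices to determine the arithmetic nature of $\bJ_{d+2,1}(\beta - \tfrac{1}{2})$, since multiplication by $2$ preserves each of the three arithmetic classes listed.

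First, I would match parameters between our statement and the cited theorem. Setting $n := d+2$ and $\alpha := 2\beta + d$, the identification $\beta - \tfrac{1}{2} = (\alpha - n + 1)/2$ is exactly the parameterization used in~\eqref{eq:J_nk_integral}. The hypothesis that $\beta \geq -1/2$ is integer or half-integer translates into $\alpha$ being a nonnegative integer with $\alpha \geq n - 3$, which is precisely the range of validity of~\eqref{eq:J_nk_integral}. Under this substitution, cases (a), (b), (c) of Proposition~\ref{prop:arithm_sylvester_beta} correspond respectively to ``$\alpha$ odd'', ``$\alpha$ and $n$ both even'', and ``$\alpha$ even with $n$ odd'', which is exactly the parity trichotomy appearing in~\cite[Theorem~1.6]{kabluchko_angles_explicit_formula}. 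In particular, the power $\pi^{-d}$ in~(b) matches $\pi^{-(n-2)}$ from the cited statement, and the exponents $\pi^{-2j}$ for $2j \leq d+1$ in~(c) match the polynomial degree in $x$ arising after $n - 1$ iterated inner integrations in~\eqref{eq:J_nk_integral}.

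Given this dictionary, the proposition is an immediate consequence of~\cite[Theorem~1.6]{kabluchko_angles_explicit_formula}. The genuine arithmetic work---tracking, in the integral~\eqref{eq:J_nk_integral}, how the constants $c_{(\alpha-1)/2}$ and $c_{\alpha n / 2}$ contribute factors of $\sqrt{\pi}$ depending on the parities of their arguments, and how the antiderivative of $(\cosh y)^\alpha$ produces either a polynomial in $\tanh y$ with rational coefficients (for $\alpha$ even) or such a polynomial together with an $x$-linear term carrying $\sqrt\pi$ (for $\alpha$ odd)---is already carried out in the cited paper and need not be repeated. The main, and essentially only, obstacle is therefore the parameter translation between the Sylvester parameter $\beta$ and the angle-sum parameter $\alpha$, which is straightforward; no new calculation is required beyond invoking~\cite[Theorem~1.6]{kabluchko_angles_explicit_formula}.
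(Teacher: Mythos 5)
Your proposal is correct and matches the paper's own (very brief) argument: the authors likewise obtain Proposition~\ref{prop:arithm_sylvester_beta} by combining $p_d(\beta)=2\,\bJ_{d+2,1}(\beta-\tfrac12)$ from Theorem~\ref{theo:sylvester_beta} with Theorem~1.6 of \cite{kabluchko_angles_explicit_formula}, without spelling out the parameter dictionary. Your explicit translation $n=d+2$, $\alpha=2\beta+d$ and the parity check is exactly the intended (and only) content of the deduction.
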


\begin{proposition}
\label{theo:arithm-sylvester_beta_prime}
Let $d\in\N$ and  $\beta> d/2$ be integer or half-integer.
\begin{itemize}
\item[(a)] If $2\beta - d$ is even, then $\tilde p_d(\beta)$ is a rational number.
\item[(b)] If $2\beta - d$ is odd, then $\tilde p_d(\beta)$ can be expressed as
$q_0 + q_2 \pi^{-2} + q_4 \pi^{-4} + \ldots + q_{d} \pi^{-d}$ (if $d$ is even) or
$q_0 + q_2 \pi^{-2} + q_4 \pi^{-4} + \ldots + q_{d+1} \pi^{-d-1}$ (if $d$ is odd),
where the coefficients $q_{2j}$ are rational numbers.
\end{itemize}
\end{proposition}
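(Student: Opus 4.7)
The proof is a direct application of Theorem~1.10 of~\cite{kabluchko_angles_explicit_formula} via the identity $\tilde p_d(\beta) = 2 \cdot \tilde \bJ_{d+2,1}(\beta + \tfrac 12)$ given by Theorem~\ref{theo:sylvester_beta}. Setting $n = d+2$ and $\alpha = 2\beta - d$, one checks $\beta + \tfrac 12 = (\alpha + n - 1)/2$, so $\tilde p_d(\beta)$ is placed precisely into the parametrization of formula~\eqref{eq:J_nk_tilde_integral}. The hypothesis that $\beta$ is integer or half-integer with $\beta > d/2$ translates to $\alpha$ being a positive integer, which is exactly the regime covered by Theorem~1.10 of~\cite{kabluchko_angles_explicit_formula}.

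That theorem classifies $\tilde \bJ_{n,1}((\alpha + n - 1)/2)$ arithmetically: the value is rational when $\alpha$ is even, and a $\bQ$-linear combination of $1, \pi^{-2}, \pi^{-4}, \ldots$ up to an explicit degree cap when $\alpha$ is odd. Since the parity of $\alpha = 2\beta - d$ coincides with the parity dichotomy in the proposition, statements (a) and (b) follow immediately. The underlying mechanism is already visible on the explicit integral: the substitution $u = \tanh y$ converts $\tilde c_{(\alpha+1)/2}(\cosh y)^{\alpha - 1}\,\dd y$ into $\tilde c_{(\alpha+1)/2}(1-u^2)^{-(\alpha+1)/2}\,\dd u$, which has a rational antiderivative when $\alpha$ is even and otherwise produces an $\arctan$ term, each such term accounting for one factor of $\pi^{-1}$.

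The only step requiring care is checking that the degree cap in Theorem~1.10 matches the statement of case~(b). This is bookkeeping of $\pi^{-1}$ factors: the $(n-1)$-st power of the inner integral, together with the outer coefficient $\tilde c_{\alpha n/2}$ and the outer $x$-integration, produce at most $n = d+2$ powers of $\pi^{-1}$; the symmetry $x\mapsto -x$ of the integrand then annihilates odd powers, leaving only $\pi^{-2j}$ terms, with the top surviving power being $\pi^{-d}$ when $d$ is even and $\pi^{-(d+1)}$ when $d$ is odd, in accordance with (b). Verifying that this alignment is consistent with the degree bound of Theorem~1.10 is the main (and essentially only) technical obstacle; after this matching is settled the proposition is immediate.
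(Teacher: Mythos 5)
Your proof takes the same route as the paper: the paper likewise obtains the proposition by combining the identity $\tilde p_d(\beta) = 2\,\tilde \bJ_{d+2,1}\big(\beta+\tfrac 12\big)$ from Theorem~\ref{theo:sylvester_beta} with Theorem~1.10 of \cite{kabluchko_angles_explicit_formula}, under exactly your substitution $n=d+2$, $\alpha=2\beta-d$ (so that $\alpha$ is a positive integer and its parity matches the dichotomy in (a)/(b)). Your supplementary ``mechanism'' sketch is not quite right --- under $u=\tanh y$ the exponent $(\alpha+1)/2$ is a half-integer precisely when $\alpha$ is even, so the antiderivative is not rational in that case --- but this heuristic is not load-bearing, and the citation-based argument stands as in the paper.
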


Numerical experiments strongly suggest  the following
\begin{conjecture}
Let $d\geq 2$. The Sylvester probability $p_d(\beta)$ is strictly increasing in $\beta>-1$ for all $d\geq 2$, while $\tilde p_d(\beta)$ is strictly decreasing in $\beta>d/2$.
\end{conjecture}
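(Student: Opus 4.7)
My plan is to reduce the conjecture to two covariance inequalities via Theorem~\ref{theo:sylvester_beta}, and then to attack those inequalities directly. The identities $p_d(\beta)=2\bJ_{d+2,1}(\beta-1/2)$ and $\tilde p_d(\beta)=2\tilde\bJ_{d+2,1}(\beta+1/2)$ transfer the monotonicity question to the expected vertex angle-sums $\bJ_{n,1}$ and $\tilde\bJ_{n,1}$ of beta and beta prime simplices, for $n=d+2\ge 4$ (the case $d=1$ being trivial). The remaining slab $\beta\in(-1,-1/2)$ in the beta case, not covered by the first identity, would be handled separately by analytic continuation using the integral formula~\eqref{eq:sylvester_beta_2}, which is valid on the full range $\beta\ge -1$ whenever $d\ge 2$.

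Starting from the probabilistic representations~\eqref{eq:Jbeta} and~\eqref{eq:Jbetaprime} and differentiating under the expectation via the score functions $\partial_\beta\log f_{n-1,\beta}(z)=\log(1-\|z\|^2)+\psi(\tfrac{n+1}{2}+\beta)-\psi(\beta+1)$ and $\partial_\beta\log\tilde f_{n-1,\beta}(z)=-\log(1+\|z\|^2)+\psi(\beta)-\psi(\beta-\tfrac{n-1}{2})$, the digamma pieces being constant in $z$ and therefore contributing zero to any covariance, I obtain
\[
\partial_\beta\bJ_{n,1}(\beta) = n\,\Cov\Bigl(\alpha(C_\beta),\,\sum_{i=1}^{n}\log(1-\|Z_i\|^2)\Bigr),
\qquad
\partial_\beta\tilde\bJ_{n,1}(\beta) = -n\,\Cov\Bigl(\alpha(\tilde C_\beta),\,\sum_{i=1}^{n}\log(1+\|\tilde Z_i\|^2)\Bigr).
\]
The conjecture is thus equivalent to strict positivity of both covariances.

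To attack them, I would exploit the polar decomposition $Z_i=R_iU_i$, under which the radii $R_i$ are independent of the directions $U_i$, the latter being i.i.d.\ uniform on $\bS^{n-2}$. Writing $g(r):=\E[\alpha(C_\beta)\mid R=r]$, the problem becomes the sign of $\Cov(g(R),\sum h(R_i))$ for a scalar monotone function $h$ (equal to $\log(1-r^2)$ in the beta case and to $\log(1+r^2)$ in the beta prime case). If $g$ were coordinate-wise monotone in $r$ in the correct direction, a Chebyshev/FKG inequality for monotone functions of independent real variables would yield the claim, and the boundary behaviour $g(r)\to 0$ as $r_1\to\infty$ (the vectors $r_iU_i-r_1U_1$ align with $-U_1$ and the cone collapses to a ray) is at least consistent with this picture.

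The principal obstacle, in my view, is that coordinate-wise monotonicity of $g$ in the off-centre radii $r_i$ ($i\ne 1$) is not geometrically obvious and may well fail for some angular configurations of the $U_j$'s, so a direct FKG-type argument is unlikely to succeed as stated. The proof will therefore require either \emph{(i)} a more refined multivariate stochastic-comparison argument, identifying a partial order on the radii under which $g$ is monotone and which is nevertheless strong enough to imply the covariance inequality; or \emph{(ii)} a direct analytic attack on the integral formulas~\eqref{eq:J_nk_integral} and~\eqref{eq:J_nk_tilde_integral}, differentiating in $\alpha$ and recognising the resulting integrand as an explicit Laplace- or Mellin-type transform of a manifestly positive function. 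I expect~\emph{(ii)} to be the more tractable route, and would begin by carrying it out for the smallest non-trivial case $n=4$ (where many quantities can be computed in closed form) to locate the correct sign-revealing rearrangement before attempting the general $n$.
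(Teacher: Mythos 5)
There is a genuine gap, and indeed the statement you are trying to prove is stated in the paper only as a conjecture, supported by numerical experiments; the authors offer no proof, so there is no ``paper proof'' to compare against. Your reduction is fine as far as it goes: passing through Theorem~\ref{theo:sylvester_beta} to the angle sums, differentiating under the expectation using the score function, and observing that the digamma terms drop out of the covariance correctly yields
$\partial_\beta\bJ_{n,1}(\beta)=n\,\Cov\bigl(\alpha(C_\beta),\sum_{i}\log(1-\|Z_i\|^2)\bigr)$
and its beta prime analogue. But this reformulation is exactly equivalent to the conjecture; it does not advance the proof. The entire difficulty is the sign of that covariance, and you yourself concede that the natural route to it --- coordinate-wise monotonicity of the conditional expected angle $g(r)=\E[\alpha(C_\beta)\mid R=r]$ in the radii, followed by FKG/Chebyshev --- ``may well fail for some angular configurations.'' A proof cannot rest on a lemma you suspect is false. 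Your fallback options (a refined stochastic-comparison order, or differentiating the integral formulas~\eqref{eq:J_nk_integral} and~\eqref{eq:J_nk_tilde_integral} in $\alpha$ and exhibiting a positive integrand) are plausible research directions, but neither is carried out, and the second is far from routine: the integrands there are complex-valued and oscillatory (note the factor $\ii$ inside the power), so ``manifestly positive'' is not something one can expect to read off after differentiation without a substantive new idea.

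In short: what you have written is a sensible research plan that isolates the right equivalent inequality, not a proof. To close the gap you would need to actually establish the covariance inequality --- for instance by finding a monotone coupling of the beta simplices across different $\beta$ under which the solid angle is a.s.\ monotone, or by a genuinely new positivity argument for the differentiated integral representations --- and none of these steps is present.
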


It can be shown~\cite[Section~3.3.1]{kabluchko_angles_explicit_formula} that $\bJ_{n,k}(\beta) \to \bJ_{n,k}(\infty)$ and $\tilde \bJ_{n,k}(\beta) \to \bJ_{n,k}(\infty)$ as $\beta\to+\infty$. Consequently, both $p_d(\beta)$ and $\tilde p_d(\beta)$ converge to $p_d(\mu_\text{Gauss})$ as $\beta \to +\infty$. Also, as $\beta\downarrow -1$, the beta distribution $\mu_{d,\beta}$ converges weakly to the uniform distribution on the unit sphere, for which the probability of a simplex is $0$. It follows that $\lim_{\beta\downarrow -1} p_d(\beta) = 0$.

\section{Proofs}\label{sec:proofs}
The proofs of both Theorem \ref{theo:sylvester_normal} and Theorem \ref{theo:sylvester_beta} are of geometric nature and rely on the following crucial observation. Given some $u\in \bS^{n-1}$ we denote by $u^{\perp}$ the hyperplane orthogonal to $u$. Given a hyperplane $H\subset\R^n$ denote by $\Pi_{H}:\R^n\mapsto H$ the orthogonal projection onto $H$. Then we have the following relation between the solid angles of a simplex and the probability that its uniform random projection is again a lower-dimensional simplex.

\begin{lemma}\label{lm:angles_probabilities}
Let $[x_1,\ldots,x_{n+1}]\subset \R^n$ be an $n$-dimensional simplex and let $U$ be a random vector uniformly distributed on the unit sphere $\bS^{n-1}$. Then
\[
\P\big[\Pi_{U^{\perp}}(x_{n+1})\in [\Pi_{U^{\perp}}(x_1),\ldots, \Pi_{U^{\perp}}(x_n)]\big]=2\alpha\big(\pos(x_1-x_{n+1},\ldots,x_{n}-x_{n+1})\big).
\]
\end{lemma}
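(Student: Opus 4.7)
The plan is to reformulate the event $\Pi_{U^{\perp}}(x_{n+1}) \in [\Pi_{U^{\perp}}(x_1), \ldots, \Pi_{U^{\perp}}(x_n)]$ as a statement about whether $U$ lies in the cone
\[
C := \pos(x_1 - x_{n+1}, \ldots, x_n - x_{n+1})
\]
or in its reflection $-C$, and then to invoke the $U \mapsto -U$ symmetry of the uniform distribution on $\bS^{n-1}$ to produce the factor of two. First, since $\Pi_{U^{\perp}}$ is a linear map with kernel $\R U$, the event in question is equivalent to the existence of $\lambda_1, \ldots, \lambda_n \geq 0$ with $\sum_{i=1}^n \lambda_i = 1$ and a real $t$ such that $x_{n+1} = \sum_{i=1}^n \lambda_i x_i + t U$. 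Subtracting $x_{n+1}$ from both sides and using $\sum_i \lambda_i = 1$ rewrites this as
\[
-t U = \sum_{i=1}^n \lambda_i (x_i - x_{n+1}), \qquad \lambda_i \geq 0, \quad \sum_{i=1}^n \lambda_i = 1.
\]

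The second step relies on $[x_1, \ldots, x_{n+1}]$ being a full-dimensional simplex in $\R^n$: the vectors $x_1 - x_{n+1}, \ldots, x_n - x_{n+1}$ form a basis of $\R^n$, so $C$ is a pointed simplicial cone with $\lin C = \R^n$ and $C \cap (-C) = \{0\}$. Linear independence also rules out $t = 0$ (which would force all $\lambda_i = 0$, contradicting $\sum \lambda_i = 1$). Dividing by $|t| > 0$ and absorbing positive factors into the $\lambda_i$, I conclude that the displayed equation admits a solution iff $U \in -C$ (case $t > 0$) or $U \in C$ (case $t < 0$). Hence the projection event coincides with $\{U \in C\} \cup \{U \in -C\}$, a disjoint union since $C$ is pointed and $\|U\| = 1$.

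For the final step, by the definition of the solid angle together with $\lin C = \R^n$, one has $\P[U \in C] = \alpha(C)$; and since the uniform law on $\bS^{n-1}$ is invariant under $U \mapsto -U$, also $\P[U \in -C] = \P[-U \in C] = \alpha(C)$. Summing the two probabilities yields the claimed $2 \alpha(C)$.

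The only real content lies in the algebraic reformulation in the first step; the rest amounts to sign bookkeeping and an appeal to the antipodal symmetry of $\bS^{n-1}$, which supplies the factor of $2$. I do not anticipate a serious obstacle.
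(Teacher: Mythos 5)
Your proof is correct. Note that the paper does not actually prove Lemma \ref{lm:angles_probabilities}: it only cites the literature (Feldman and Klain, Equation (1), with the idea traced back to Shephard and Perles--Shephard). Your argument is a clean, self-contained version of the standard reasoning behind that citation: the projected point $\Pi_{U^{\perp}}(x_{n+1})$ lands in $[\Pi_{U^{\perp}}(x_1),\ldots,\Pi_{U^{\perp}}(x_n)]$ precisely when the line $x_{n+1}+\R U$ meets the opposite facet, which after translating by $x_{n+1}$ and using that the edge vectors form a basis of $\R^n$ is exactly the event $U\in C\cup(-C)$ for the simplicial (hence pointed) cone $C$; antipodal invariance of the uniform law and the identification of $\P[U\in C]$ with $\alpha(C)$ (both being the same normalized cone measure, via $\lambda(C\cap\bB^n)=\tfrac1n\,\sigma(C\cap\bS^{n-1})$) then give the factor $2$. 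All steps, including the exclusion of $t=0$ and the disjointness of $C$ and $-C$ on the sphere, are justified, so your write-up could serve as a proof of the lemma where the paper merely refers elsewhere.
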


\noindent This formula can be found for example in \cite[Equation (1)]{feldman_klain}, but the idea of interpreting probabilities as angles can be traced back to~\citet{shephard_gram_elementary} and~\citet{perles_shephard}. It is also used in~\cite{affentranger_schneider_random_proj}.

\begin{proof}[Proof of Theorem \ref{theo:sylvester_normal}]
Consider $d+2$ i.i.d.\ random points $Y_1,\ldots, Y_{d+2}$ in $\R^{d+1}$ with the standard normal distribution. Their convex hull $[Y_1,\ldots, Y_{d+2}]$ is a $(d+1)$-dimensional simplex a.s.  Let $H\subset \R^{d+1}$ be any hyperplane passing through $0$. Then the random points  $\Pi_H(Y_1),\ldots, \Pi_H(Y_{d+2})$ are independent and standard normal in $H$. (Note that after applying a rotation, we may assume that $H = \R^d$. This is possible since standard normal distribution is rotationally invariant.)
It follows that
$$
p_d(\mu_{\text{Gauss}})
=
(d+2)\cdot \P\big[X_{d+2} \in [X_1,\ldots, X_{d+1}]\big]
=
(d+2)\cdot \P\big[\Pi_H(Y_{d+2})\in [\Pi_H(Y_1),\ldots, \Pi_H(Y_{d+1})]\big],
$$
which holds for any hyperplane $H\subset\R^{d+1}$. By the formula of total probability, this conclusion remains in force if we choose $H= U^\perp$, where $U$ is uniformly distributed on $\bS^d$ and independent of $Y_1,\ldots,Y_{d+2}$. Then
\begin{align*}
p_d(\mu_{\text{Gauss}})&=(d+2)\cdot \bE\big[\P\big[\Pi_{U^{\perp}}(Y_{d+2})\in [\Pi_{U^{\perp}}(Y_1),\ldots, \Pi_{U^{\perp}}(Y_{d+1})]\,|\, Y_1,\ldots, Y_{d+2}\big]\big]\\
& =2(d+2)\cdot  \E \alpha \big(\pos (Y_1-Y_{d+2}, \ldots, Y_{d+1} - Y_{d+2})\big),
\end{align*}
where the second equality follows by Lemma \ref{lm:angles_probabilities}.
It has been shown in~\cite{kabluchko_zaporozhets_gauss_simplex} and~\cite{GoetzeKabluchkoZaporozhets} that the expected angle sums of the Gaussian simplex $[Y_1,\ldots, Y_{d+2}]$ coincide with the angle sums of the regular simplex of the same dimension, in particular
$$
(d+2) \cdot \E \alpha \big(\pos (Y_1-Y_{d+2}, \ldots, Y_{d+1} - Y_{d+2})\big) =  (d+2) \cdot \alpha \big(\pos(e_2 - e_{1}, \ldots , e_{d+2} - e_{1})\big) = \bJ_{d+2,1} (\infty),
$$
where $e_1,\ldots, e_{d+2}$ is the standard orthogonal basis in $\R^{d+2}$. Combining everything together proves that $p_d(\mu_{\text{Gauss}}) = 2 \cdot \bJ_{d+2,1} (\infty)$. To complete the proof of \eqref{eq:sylvester_normal} it remains to apply~\eqref{eq:J_n_1_formula}.
\end{proof}

\begin{remark}
Let us explain how to deduce Theorem~\ref{theo:sylvester_normal} from~\cite{kabluchko_zaporozhets_absorption}.
Let $\sigma^2 >0$. By Theorem~1.1 in~\cite{kabluchko_zaporozhets_absorption} (in which we take $n=d+1$,  consider the complementary probability and use Remark~1.1 in~\cite{kabluchko_zaporozhets_absorption} to simplify it), we have
\begin{align*}
\P[\sigma X_{d+2} \in [X_1,\ldots, X_{d+1}]]
&=
1-2(b_{d+1, d-1} (\sigma^2) + b_{d+1, d-3} (\sigma^2) + \ldots)
=
2 b_{d+1, d+1} (\sigma^2)
\\
&= \frac{2}{\sqrt{2\pi}}  \int_{-\infty}^{+\infty} \Phi^{d+1} \left( \frac{\ii x \sigma}{\sqrt {1+(d+1) \sigma^2}}\right)  \eee^{-x^2/2} \dd x,
\end{align*}
where the last formula follows from  Equation~(1.7) in~\cite{kabluchko_zaporozhets_absorption}.
Recalling that $p_d(\mu_{\text{Gauss}})=(d+2)\cdot \P\big[X_{d+2} \in [X_1,\ldots, X_{d+1}]\big]$ and taking $\sigma=1$ in the above formula gives~\eqref{eq:sylvester_normal}.
\end{remark}

\begin{proof}[Proof of Theorem \ref{theo:sylvester_beta}]
Let us consider the beta case first. Our aim is to show that $p_d(\beta) = 2 \cdot \bJ_{d+2,1} (\beta - \frac 12)$.
To this end, we consider $d+2$ i.i.d.\ random points $Y_1,\ldots, Y_{d+2}$ in $\R^{d+1}$ with distribution $\mu_{d+1,\beta - \frac 12}$. Note that we need the condition $\beta \geq - \frac 12$ to ensure that $\beta - \frac 12 \geq -1$.  Let $H\subseteq \R^{d+1}$ be any hyperplane passing through $0$. By the projection property of the beta distributions (see, e.g., \cite[Lemma~4.4]{beta_polytopes_temesvari} or~\cite[Lemma~3.1]{beta_polytopes}) the random points  $\Pi_H(Y_1),\ldots, \Pi_H(Y_{d+2})$ are independent and distributed according to $\mu_{d,\beta}$ in $H$. (To see this, we may assume that $H=\R^d$ by applying some rotation and using rotational invariance of $\mu_{d+1,\beta-\frac12}$.) It follows that
$$
p_d(\beta)
=
(d+2)\cdot \P\big[X_{d+2} \in [X_1,\ldots, X_{d+1}]\big]
=
(d+2)\cdot \P\big[\Pi_H(Y_{d+2})\in [\Pi_H(Y_1),\ldots, \Pi_H(Y_{d+1})]\big],
$$
which holds for any hyperplane $H$. By the formula of total probability, we may choose $H:= U^\perp$, where $U$ is uniform on $\bS^d$ and independent of $Y_1,\ldots, Y_{d+2}$, and we get
%
\begin{align*}
p_d(\beta)&=(d+2)\cdot \bE\big[\P\big[\Pi_{U^{\perp}}(Y_{d+2})\in [\Pi_{U^{\perp}}(Y_1),\ldots, \Pi_{U^{\perp}}(Y_{d+1})]\,|\, Y_1,\ldots, Y_{d+2}\big]\big]\\
& =2(d+2)\cdot  \E \alpha \big(\pos (Y_1-Y_{d+2}, \ldots, Y_{d+1} - Y_{d+2})\big)\\
&=
2 \cdot \bJ_{d+2,1} \left(\beta - \frac 12\right),
\end{align*}
where the second equality follows by Lemma \ref{lm:angles_probabilities}, while the last equality is a consequence of~\eqref{eq:Jbeta}.
Applying~\eqref{eq:J_nk_integral} with $n=d+2$ and $\alpha = 2\beta + d$ completes the proof in the beta case under the assumption $\beta \geq - \frac 12$.

To show that~\eqref{eq:sylvester_beta_2} holds for $\beta \geq -1$ and $d\geq 2$ we argue by analytic continuation. The function $p_d(\beta)$, originally defined for real $\beta>-1$, admits analytic continuation to the half-plane $\{\Re \beta >-1\}$. Indeed, we can write $p_d(\beta) = (d+2) \int_{D} f_{d,\beta}(x_1) \ldots f_{d,\beta} (x_{d+2}) \dint x_1 \dots \dint x_{d+2}$, where $D = \{(x_1,\dots, x_{d+2})\in (\bB^d)^{d+2}: x_{d+2} \in [x_1,\ldots, x_{d+1}]\}$ and then apply Lemma~4.3 in~\cite{beta_polytopes}. On the other hand, one can easily check that  the double integral  in~\eqref{eq:sylvester_beta_2} defines an analytic function on $\{\Re \beta > -\frac d2 - \frac 1{d+2}\}$. By the identity theorem for analytic functions (see, e.g., \cite[p.~125]{freitag_busam_book}), formula~\eqref{eq:sylvester_beta_2} for $p_d(\beta)$ continues to hold for all $\beta >-1$, provided $d\geq 2$, and the case $\beta = -1$ follows by continuity.

The proof in the beta prime case is similar if one starts with $\tilde Y_1,\ldots, \tilde Y_{d+2}$ that are i.i.d.\ in $\R^{d+1}$ and distributed according to $\tilde \mu_{d+1,\beta + \frac 12}$. Then, the projections of these points to any hyperplane in $\R^{d+1}$ are beta prime distributed with parameter $\beta$ (see, e.g., \cite[Lemma~4.4]{beta_polytopes_temesvari} or~\cite[Lemma~3.1]{beta_polytopes}). Note that, in contrast to the beta case, no additional restrictions on $\beta$ appear since $\beta + \frac 12 > \frac{d+1}{2}$. It remains to apply \eqref{eq:Jbetaprime} instead of~\eqref{eq:Jbeta} and~\eqref{eq:J_nk_tilde_integral} with $n=d+2$ and $\alpha = 2\beta - d$ instead of~\eqref{eq:J_nk_integral}. Since~\eqref{eq:J_nk_tilde_integral} requires $\alpha n >1$ we have to assume $\beta > \frac{d}2 + \frac 1 {2(d+2)}$.
\end{proof}

\begin{remark}
Proofs of both, Theorem \ref{theo:sylvester_normal} and Theorem \ref{theo:sylvester_beta}, rely on Lemma~\ref{lm:angles_probabilities} and the fact that standard normal, beta and beta prime distributions are rotationally invariant. Two additional crucial ingredients are the invariance of the above families of distributions under projections and the formulas for the expected solid angles of the corresponding simplices.
If some distribution $\mu$ in $\R^d$ can be represented as a projection of a rotationally distribution $\nu$ in $\R^{d+1}$, the probability $p_d(\mu)$ can be written as twice the angle of a $(d+1)$-dimensional random simplex with i.i.d.\ vertices distributed according to $\nu$. However, computing the expected solid angles of a random simplex is in general a hard problem.
\end{remark}



\section*{Acknowledgement}
Both authors were supported by the German Research Foundation under Germany's Excellence Strategy  EXC 2044 -- 390685587, \textit{Mathematics M\"unster: Dynamics - Geometry - Structure} and by the DFG priority program SPP 2265 \textit{Random Geometric Systems}.

\bibliography{sylvester_for_beta_bib_v4}
\bibliographystyle{plainnat}

\end{document}